\theoremstyle {plain}
\newtheorem{theorem}{Theorem}
\newtheorem{lem}{Lemma}
\newtheorem{defe}{Definition}
\newtheorem{conj}{Conjecture}
\newcommand{\eps}{\varepsilon}
\newcommand{\Ent}[2]{\mathbb{H}_{#1}(#2)}
\def\diam{\rm{diam}}
\begin{document}

\title{Correction of metrics}
\author{P. B. Zatitskiy}
\address{St. Petersburg Department \\
of V.A.Steklov    Institute   of   Mathematics \\
of   the   Russian   Academy   of   Sciences
\\
Fontanka 27, \\ 191023, St. Petersburg, Russia
\\
P. L. Chebyshev Laboratory \\ of St. Petersburg State University
\\ 14th Line 29B, Vasilyevsky Island,\\
199178, St. Petersburg, Russia}
 \email{paxa239@yandex.ru}

\author{F. V. Petrov}
\address{St. Petersburg Department \\
of V.A.Steklov    Institute   of   Mathematics \\
of   the   Russian   Academy   of   Sciences
\\
Fontanka 27, \\ 191023, St. Petersburg, Russia}
\email{fedyapetrov@gmail.com}

\date{1 Oct. 2011}
\keywords{Lebesgue space, semimetric}

\thanks{This research is supported by the 
Chebyshev Laboratory (Department of Mathematics and 
Mechanics, St. Petersburg State University) under 
RF Government grant 11.G34.31.0026, and RFBR grants
11-01-12092-ofi-m and
11-01-00677-a.}
\maketitle

Metric triple $(X,\rho,\mu)$ is a space with structures of metric $\rho$
and Borel probabilistic measure $\mu$. The studying of them was initiated by
M.~L.~Gromov in \cite{G}. Namely, it was proved that metric triples
are uniquely parametrized by measures on the set of distance matrices
$(\rho(x_i,x_j))_{i,j}$, generated by random and independent 
choice of points
 $x_1,x_2,\dots\in X$. A.~M.~Vershik gave another formulation of this fact
and  proposed its new proof, based on the Ergodic theorem, and initiated
the studying of variable metrics on the measurable space. In particular, in
 \cite{AMU,AM} he suggested to use dynamics of metrics for studying the
 entropy and other invariants of dynamical systems. It requires the preliminary
 studying of the set of matrics and ``almost metrics'' on the standard
 measurable space. The present note is devoted to correction
 of almost semimetrics to real metrics on the set of
 full measure. We  are going to
 use the obtained theorems in further
work on application of metrics dynamics to ergodic theory.

In this article we will consider only Lebesgue spaces with continuous measure.

\begin{defe} Let $(X,\mu)$ be Lebesgue space with probabilistic measure,
$\rho(x,y)$ be measurable non-negative function on $(X\times
X,\mu\times \mu)$ such that $\rho(x,y)=\rho(y,x)$
and $\rho(x,z)\leq \rho(x,y)+\rho(y,z)$ for almost all $x$, $y$, $z$
in $X$. We say that $\rho$ is \emph{almost-metric} on $X$.

We say that almost-metric  $\rho$ is \emph{essentially separable},
if for any $\varepsilon>0$ one may cover
$X$ by a countable set of measurable
subsets of essential diameter less then
$\varepsilon$ (essential diameter of the set
$A\subset X$ is defined as the essential supremum
of the function $\rho(x,y)$, restricted to
 $A\times A$).

Two almost-metrics which coincide on the set of full $(\mu\times \mu)$-measure as
functions of 2 variables, are called \emph{equivalent}.
Sometimes we also say that one of two
equivalent metrics is a \emph{correction} of another
(usually corrected metric is in a  sense better then the initial one).

Metric (or semimetric) on the Lebesgue space is called \emph{admissible},
if the corresponding (semi-)metric space is separable on the set
of full measure.

We say that a semimetric
$\rho$ on the space with Borel probabilistic
measure has a finite  $\eps$-entropy $\Ent{\eps}{\rho}<\infty$,
if there exist a finite number of balls of radius $\eps$,
which cover the set of measure at least  $1-\eps$.

Closed ball of radius $r$ centered in $x$ is denoted by $B(x,r)$.

\end{defe}

The following lemma lists several equivalent definitions
of admissible metric.

\begin{lem}\label{kritdop} Let $(X,\rho)$ be a semimetric space
and $\mu$ be Borel measure on it. Then the following statements are equivalent:\\
{\rm1)} For any $\eps>0$ the semimetric $\rho$ has finite  $\eps$-entropy.\\
{\rm 2)} There exists a set of full measure $\mu$ such that restriction of $\rho$ on this set is separable\\
{\rm 3)} For $\mu$-almost all $x$ all balls (in semimetric $\rho$) centered in $x$ have positive measure.\\
\end{lem}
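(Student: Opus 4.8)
The plan is to prove the three conditions equivalent cyclically, establishing $1)\Rightarrow 2)\Rightarrow 3)\Rightarrow 1)$.

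For $1)\Rightarrow 2)$, I would apply the finite-entropy hypothesis to a summable sequence of radii, say $\eps_n=2^{-n}$: for each $n$ fix finitely many balls of radius $\eps_n$ whose union $Y_n$ satisfies $\mu(X\setminus Y_n)\le\eps_n$, and let $D$ be the countable collection of all their centers. Since $\sum_n\mu(X\setminus Y_n)<\infty$, the Borel--Cantelli lemma gives $\mu(\liminf_n Y_n)=1$. Setting $Y=\liminf_n Y_n$, every $y\in Y$ lies in some $\eps_n$-ball centered at a point of $D$ for all large $n$, so $D$ is dense in $Y$; as a subspace of the separable semimetric space $Y\cup D$, the set $Y$ is itself separable. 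Thus $\rho$ restricted to the full-measure set $Y$ is separable.

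For $2)\Rightarrow 3)$, let $X_0$ be a full-measure set on which $\rho$ is separable and $D=\{d_i\}$ a countable dense subset of $X_0$. I would consider the countable family of balls $B(d_i,q)$ with $d_i\in D$ and $q\in\mathbb{Q}_{>0}$, and let $Z$ be the union of those members having measure zero, so $\mu(Z)=0$. For $x\in X_0\setminus Z$ and any $r>0$, choosing $d_i$ with $\rho(x,d_i)<r/3$ and a rational $q\in[r/3,2r/3)$ yields $x\in B(d_i,q)\subseteq B(x,r)$; hence $B(d_i,q)\not\subseteq Z$, so $\mu(B(d_i,q))>0$ and therefore $\mu(B(x,r))>0$. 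This gives $3)$ on the full-measure set $X_0\setminus Z$. For $3)\Rightarrow 1)$, fix $\eps>0$ and let $X_0$ be the full-measure set on which every ball has positive measure. The idea is a packing argument preceded by a truncation. The function $f(x)=\mu(B(x,\eps/4))$ is positive on $X_0$, so the superlevel sets $X_\delta=\{x\in X_0:f(x)\ge\delta\}$ increase to $X_0$ as $\delta\downarrow 0$, and I can fix $\delta_0>0$ with $\mu(X_{\delta_0})>1-\eps$. For any $(\eps/2)$-separated family in $X_{\delta_0}$, the balls $B(x_i,\eps/4)$ are pairwise disjoint and each has measure $\ge\delta_0$, so such a family has at most $1/\delta_0$ members; taking one of maximal cardinality $x_1,\dots,x_N$, maximality forces $X_{\delta_0}\subseteq\bigcup_{i=1}^N B(x_i,\eps/2)\subseteq\bigcup_{i=1}^N B(x_i,\eps)$, a finite union of $\eps$-balls of measure $>1-\eps$, which is precisely finite $\eps$-entropy.

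The main obstacle is this last implication, and within it the measurability of the truncation: one must know that $f(x)=\mu(B(x,\eps/4))$ is measurable (equivalently, that the sets $X_\delta$ are measurable), so that $\mu(X_{\delta_0})$ is defined and the continuity $\mu(X_\delta)\uparrow\mu(X_0)=1$ may be invoked. This follows from the joint measurability of $\rho$ on $X\times X$ via Fubini's theorem, which I would record explicitly before running the argument. The remaining steps are soft, relying only on Borel--Cantelli, the hereditary separability of separable (semi)metric spaces, and the elementary packing bound.
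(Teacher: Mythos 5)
Your first two implications are sound: $1)\Rightarrow 2)$ via Borel--Cantelli is a clean variant of the paper's scheme (the paper runs the cycle in the opposite direction, $1)\Rightarrow 3)\Rightarrow 2)\Rightarrow 1)$), and your $2)\Rightarrow 3)$ argument with the countable family of rational-radius balls centered at a dense set is correct.

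The gap is in $3)\Rightarrow 1)$, precisely at the point you single out as the main obstacle. The measurability of $f(x)=\mu(B(x,\eps/4))$ does \emph{not} follow from the hypotheses of the lemma. You propose to deduce it from the joint measurability of $\rho$ on $X\times X$ via Fubini, but joint measurability is not assumed: the lemma only posits that $\rho$ is a semimetric and that $\mu$ is Borel for the induced topology. Joint Borel measurability of $\rho$ is automatic when the space is separable, but separability is exactly what you are not yet entitled to while proving $3)\Rightarrow 1)$ in a cyclic scheme; for non-separable semimetric spaces the paper explicitly records the $\mathbb{B}\times\mathbb{B}$-measurability of $\rho$ as an open question. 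Without measurability of the superlevel sets $X_\delta$ the truncation collapses: one cannot choose $\delta_0$ with $\mu(X_{\delta_0})>1-\eps$, and outer measure is not continuous along decreasing sequences of non-measurable sets, so no limiting argument rescues that choice. The repair is to interpolate condition $2)$, as the paper does: a maximal $\eps$-separated subset of the full-measure set from condition $3)$ must be countable, since an uncountable one would produce uncountably many pairwise disjoint balls of radius $\eps/3$, each of positive measure, and a family of positive reals with bounded partial sums is at most countable; this yields separability, after which the countable cover by $\eps$-balls of full measure admits a finite subfamily of measure $>1-\eps$ by continuity from below. That route uses only the measurability and positivity of \emph{individual} balls, never of the function $x\mapsto\mu(B(x,r))$.
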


\begin{proof}
Let's prove implication from 1) to 3). Consider the sets
 $T_n=\{x\in X: \mu(B(x,\frac{1}{n}))=0\}.$ It suffices
 to prove that $\mu(T_n)=0$ for all $n$. A priori we do not even know
 that $T_n$ is measurable, but we do not need it.
  Choose any $\eps \in (0,\frac{1}{10n})$. Condition 1)
  says that $X$ may be partitioned onto disjoint sets
  $X_0,X_1,\dots,X_k$ such that $\mu(X_0)<\eps$ and $\diam(X_j)<2\eps$ for all $j\in\{1,\dots,k\}$.
  Without loss of generality sets $X_j$ have positive measure (if $\mu(X_j)=0$, remove it
  and replace $X_0$ to $X_0\cup X_j$). Then for any point
$x \notin X_0$ the ball $B(x,2\eps)$ contains one of the sets $X_j$,
hence has strictly positive measure. It follows that $T_n \subset X_0$.
So we have proved that for arbitrarily small $\eps>0$ the set
$T_n$ is contained in a set of measure less then
 $\eps$. It follows that $T_n$
 is measurable and have zero measure. The union of $T_n$'s taken by all positive integers $n$
 has zero measure aswell. This proves 3).

Let' prove that 3) implies 2). Consider  the set $Y$ of points such that all balls
centered in those points have positive measure. Then $Y$ is a set of full measure and it suffice
to prove that for any $\eps>0$ it has a countable $\eps$-net.
Assume that for some $\eps>0$ it does not have a countable $\eps$-net, then by Zorn's lemma
$Y$ has a more then countable subset with mutual distances at least $\eps$. But then
the balls of radius $\eps/3$ centered in those points  have
positive measures and are mutually disjoint, so the sum of their measures does not exceed 1. But
any summable family is at most countable. A contradiction.

Let' finally prove that 2) implies 1). Let a countable set $\{x_n\}_{n=1}^{\infty}$
be dense in a set $X'$ of full measure. For fixed $\eps>0$ consider the balls
$B_n=B(x_n,\eps)$. The union of them has full measure. But then for some finite $N$ the finite
union $\bigcup\limits_{n=1}^N B_n$ has measure more then $1-\eps$, as desired.
\end{proof}

\begin{theorem}\label{ispravlenie}

{\rm1)} Let $(X,\mu)$ be a Lebesgue space, $\rho$ be almost metric on $X$.
Then $\rho$ may be corrected to everywhere finite semimetric
on $X$.

{\rm2)} If $\rho$ was essentially separable, then corrected
semimetric may be chosen so that  $(X,\rho)$ is separable.
In other words, the corrected semimetric may be chosen
admissible.
\end{theorem}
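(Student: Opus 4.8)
The plan is to base everything on a single device: \emph{conditional averaging respects the triangle inequality}. I would first reduce part~1) to the case of a bounded $\rho$. Since $\min(\rho,M)$ is again an almost-metric for every $M$, and since the supremum of a countable family of genuine semimetrics is a genuine semimetric, it suffices to correct each $\min(\rho,M)$ and then take the supremum over $M\in\mathbb{N}$, which recovers $\rho$ almost everywhere. So assume $0\le\rho\le M$; in particular $\rho\in L^1(\mu\times\mu)$.

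Fix an increasing sequence of finite measurable partitions $\xi_1\le\xi_2\le\cdots$ generating the $\sigma$-algebra of the Lebesgue space, and for blocks $A,B$ of $\xi_n$ let $\rho_n(A,B)$ be the average of $\rho$ over $A\times B$; equivalently $\rho_n=\mathbb{E}[\rho\mid\xi_n\times\xi_n]$, a function that is constant on blocks. The key observation is that this block function is \emph{genuinely} symmetric and \emph{genuinely} satisfies the triangle inequality: integrating the almost-everywhere inequality $\rho(x,z)\le\rho(x,y)+\rho(y,z)$ over $(x,y,z)\in A\times B\times C$ gives $\rho_n(A,C)\le\rho_n(A,B)+\rho_n(B,C)$ for \emph{all} triples of blocks. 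I would then set $\tilde\rho(x,y)=\limsup_n\rho_n(x,y)$ on all of $X\times X$. Passing to the $\limsup$ in the per-level inequality and using subadditivity of $\limsup$ shows that $\tilde\rho$ is symmetric and obeys the triangle inequality \emph{everywhere}, while the martingale convergence theorem gives $\rho_n\to\rho$ almost everywhere, so $\tilde\rho=\rho$ off a null set. It remains to tidy up what the measure does not see: set $\tilde\rho(x,x)=0$ (harmless for the triangle inequality), and, for the general unbounded case where $\tilde\rho=\sup_M\tilde\rho_M$ may be infinite on a null set of pairs, note that ``$\tilde\rho<\infty$'' is an equivalence relation with one class of full measure (since for a.e.\ $x$ one has $\tilde\rho(x,y)<\infty$ for a.e.\ $y$); collapsing the null complement onto a fixed base point makes $\tilde\rho$ everywhere finite while altering it only on a null set. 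This proves part~1).

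For part~2) I would convert essential separability into an honest countable net. For each $k$, cover $X$ by countably many sets of essential diameter $<1/k$, discard the null ones, and in each remaining piece $A$ choose a point $p$ with $\rho(p,\cdot)<1/k$ almost everywhere on $A$ (possible by the definition of essential diameter together with Fubini). The resulting countable family $P=\{p_{k,i}\}$ then satisfies, for almost every $x$ and every $k$, $\tilde\rho(x,p_{k,i})=\rho(x,p_{k,i})<1/k$ for the piece containing $x$, because each coordinate $x\mapsto\rho(x,p)$ is $1$-Lipschitz for the corrected metric. Hence $P$ is $\tilde\rho$-dense on a set of full measure, so $(X,\tilde\rho)$ is separable on a full-measure set, i.e.\ admissible, which by Lemma~\ref{kritdop} is exactly the required conclusion. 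Equivalently, one may instead take the manifestly separable semimetric $\sup_{p\in P}|\rho(x,p)-\rho(y,p)|$ and check, using the net, that it again equals $\rho$ almost everywhere.

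The main obstacle is the very first conceptual point of part~1): promoting the triangle inequality \emph{for almost every triple} to one holding for \emph{every} triple. The naive idea of restricting $\rho$ to a full-measure ``good'' square is doomed, since a null set of bad triples (for instance one supported on a hypersurface) need not be avoidable by any full-measure product set. Conditional averaging circumvents this precisely because it is a linear operation that honors the triangle inequality at every finite level and converges back to $\rho$, so the everywhere-valid inequality is inherited by the limit. The residual issues—integrability (handled by truncation), the invisible diagonal, and everywhere-finiteness (handled by collapsing a null set)—are routine bookkeeping.
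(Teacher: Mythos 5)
Your proof is correct, and it rests on exactly the same core insight as the paper's --- that averaging is a positive linear operation which converts an almost-everywhere triangle inequality into an everywhere one, and that an a.e.-convergence theorem then recovers $\rho$ off a null set --- but you implement it differently at both points where a real theorem is invoked. The paper identifies the Lebesgue space with the circle $\mathbb{R}/\mathbb{Z}$, averages over small squares $[0,T]^2$ of translations, and appeals to the Lebesgue differentiation theorem; you average over the blocks of an increasing sequence of finite generating partitions and appeal to martingale convergence for $\mathbb{E}[\rho\mid\xi_n\times\xi_n]$. Likewise, the paper secures integrability by passing to an equivalent measure that makes $\rho(x_0,\cdot)$ summable, whereas you truncate to $\min(\rho,M)$ and recover $\rho$ as a countable supremum of corrected semimetrics. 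Your route stays intrinsic to the measurable structure (no identification with the circle, no change of measure) and is arguably more canonical; the paper's route makes the ``averages converge back to $\rho$'' step a one-line citation once the circle model is fixed. The endgame --- zeroing the diagonal, and collapsing the null set where finiteness or the construction fails onto a base point via the full-measure equivalence class of ``finite distance'' --- is the same in both arguments, as is your proof of part 2), which is essentially verbatim the paper's Fubini argument that each set of essential diameter $<1/k$ is mod $0$ contained in a ball of radius $1/k$ about a well-chosen point.

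Two small points to tidy. First, in part 1) you should either discard or avoid blocks of measure zero (in a Lebesgue space with continuous measure you may simply choose generating partitions all of whose blocks have positive measure), since the block average is undefined there; your final collapsing step absorbs this, but it should be said. Second, in part 2) the clause ``because each coordinate $x\mapsto\rho(x,p)$ is $1$-Lipschitz'' is not the right justification --- what you actually need, and what your construction already provides, is that for a suitably chosen $p$ one has $\tilde\rho(p,y)=\rho(p,y)<1/k$ for a.e.\ $y$ in the piece, by Fubini; and the parenthetical alternative $\sup_{p\in P}\lvert\rho(x,p)-\rho(y,p)\rvert$ is not ``manifestly separable'' (the sup metric on a countable index set is the $\ell^\infty$ metric, which is not separable without a further total-boundedness argument). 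Neither point affects the validity of the main line of your proof.
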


\begin{proof}
1) Note that for almost all $x$ the function
$\rho(x,\cdot)$ is measurable and inequality
$\rho(x,y)+\rho(x,z)\geq \rho(y,z)$ holds for almost all pairs
$(y,z)$. Fix such point $x_0$. At first, we replace the measure $\mu$
to some quivalent measure so that the function $f(t)=\rho(x_0,t)$ becomes summable.
For example, we may take $A_n=f^{-1}([n-1,n))$ and put
$\tilde{\mu}(B)=c\sum_{n=1}^{\infty} 2^{-n} \mu(B\cap A_n)$
for any measurable $B$ and the constant $c$ is chosen
so that
$\tilde{\mu}(X)=1$. Note that now the function
$\rho(y,z)$ becomes summable as a function of two variables by
triangle inequality.

Now we identify Lebesgue space and the unit circle $S=\mathbb{R}/\mathbb{Z}$
equipped byt he Lebesgue probabilistic measure.
Define the new metric
$\tilde{\rho}$ (possibly infinite somewhere) by equality
$$
\tilde{\rho}(x,y)=\limsup_{T\rightarrow +0} T^{-2}\int_{0}^T\int_0^T \rho(x+t, y+s) dt ds \eqno(1)
$$

Note that by Lebesgue theorem on differentiation
of integral the limit exists and equals $\rho(x,y)$ almost
surely (i.e. for almost all pairs $(x,y)$).
This function $\tilde{\rho}$ is obviously symmetric. Let's prove triangle inequality
for it. Note that for almost all
$(s,t,\tau)\in [0,T]^3$ one has
$$\rho(y+s, z+t)\leq \rho(y+s, x+\tau)+\rho(x+\tau, z+t).$$
Let's integrate it by
 $[0,T]^3$, divide by $T^3$ and take an upper limit for $T\rightarrow +0$.
Using inequality $\limsup (F+G)\leq \limsup F+\limsup G$ we get a triangle
inequality for $\tilde{\rho}$. If $\tilde{\rho}$ does not vanish in some points
on diagonal, replace those values to 0. Note that the function
$\tilde{\rho}$ is finite almost everywhere (since
equivalnet function $\rho$ is almost everywhere finite).
Hence one may choose a point $x_0$ so that $\tilde{\rho}(x_0,x)<\infty$ for
almost all
$x\in S$, i.e. for all $x$ in the set $S_1$ of full measure.
Now change the semimetric $\tilde{\rho}$ outside $S_1\times S_1$ by the rules
$\tilde{\rho}(x,y):=\tilde{\rho}(x_0,y)$ for $x\notin S_1, y \in S_1$,
$\tilde{\rho}(x,y):=\tilde{\rho}(x,x_0)$ for $y\notin S_1, x \in S_1$ and
$\tilde{\rho}(x,y):=0$ for $x,y\notin S_1$. This semimetric
is almost everywhere finite.

2) Using the statement of p. 1), we may
suppose that the semimetric $\rho$ is defined on whole $X$ and
satisfies triangle inequality everywhere. Let's prove that there
exists a set $Y$ of full measure so that restriction of $\rho$
on $Y$ is a separable semimetric space.
The correction outside $Y$ is done as descibed above in the end
of proof of 1), and separability is saved after such correction.
It suffices for any fixed positive integer $n$ to find a countable union of balls
of radius $1/n$ which has a full measure, then define $Y$ as the intersection
of such sets. Let's cover $X$ by a countable number of measurable sets
of diameter at most $1/n$. Consider one such set $A$. For almost
all points $x\in A$ the distances from $x$ to almost
all points of $A$ do not exceed  $1/n$. Hence $A$ mod $0$ is covered
by a ball of radius $1/n$, and we are done.
\end{proof}

Usually when one says about the space with metric
and measure, she considers the metric structure as ``main'',
and restricts measure to satisfy some properties in terms
of metric (Borel, regular measures). We follow
A.~M.~Vershik's approach, considering the metric as
measurable function. Note that if $(X,\rho)$
is separable semimetric space, and $\mu$ is a Borel
measure on $X$, then $\rho$ as a function on
$X\times X$ is measurable w.r.t. the measure $\mu\times \mu$
(since it is continuous and hence Borel
measurable).

The follwoing theorem
shows that in separable case
those two conditions
(``measure is Borel'' and ``metric is measurable'')
are equivalent.


\begin{theorem} Let $(X,\mu)$ be the Lebesgue space,
$\rho$ be admissible semimetric on $X$. Then the measure
$\mu$ is Borel w.r.t. topology of metric $\rho$.
\end{theorem}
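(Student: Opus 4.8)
The plan is to show that every set open in the topology of $\rho$ is $\mu$-measurable; since open sets generate the $\rho$-Borel $\sigma$-algebra, this yields the claim. Because $(X,\mu)$ is a Lebesgue space its $\sigma$-algebra is complete, so every subset of a $\mu$-null set is measurable. Consequently it suffices to produce a single $\mu$-measurable set $X''$ of full measure and to verify that $U\cap X''$ is measurable for every $\rho$-open $U$: writing $U=(U\cap X'')\cup(U\setminus X'')$, the second piece lies in the null set $X\setminus X''$ and is automatically measurable.

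First I would exploit that, in the present framework, $\rho$ is a measurable function on $(X\times X,\mu\times\mu)$. By Fubini's theorem there is a set $G$ of full $\mu$-measure such that for every $x\in G$ the section $y\mapsto\rho(x,y)$ is $\mu$-measurable; for such $x$ and any $r\geq 0$ the closed ball $B(x,r)=\{y:\rho(x,y)\leq r\}$ is therefore $\mu$-measurable. The point to keep in mind — and the source of the only genuine difficulty — is that for an \emph{arbitrary} centre the section need not be measurable, so every centre used below must be taken from the good set $G$.

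Next I would use admissibility. By definition there is a set $X'$ of full measure on which $\rho$ is separable. A subspace of a separable (semi)metric space is again separable, so the subspace $X''=G\cap X'$ (still of full measure) is separable; let $D=\{d_1,d_2,\dots\}$ be a countable dense subset of $X''$. By construction $D\subseteq G$, hence every ball $B(d,q)$ with $d\in D$ and $q\in\mathbb{Q}_{>0}$ is $\mu$-measurable. I claim the countable family $\{B(d,q):d\in D,\ q\in\mathbb{Q}_{>0}\}$ is a base of the subspace topology on $X''$: given $y\in X''$ and $\eps>0$, density of $D$ yields $d\in D$ with $\rho(y,d)<\eps/3$; choosing a rational $q$ with $\rho(y,d)\leq q<\eps/2$ gives $y\in B(d,q)$ and, by the triangle inequality, $B(d,q)\cap X''\subseteq B(y,\eps)$.

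Finally I would assemble the pieces. For any $\rho$-open $U$, the intersection $U\cap X''$ is open in the subspace $X''$ and hence a countable union of sets $B(d_j,q_j)\cap X''$, each $\mu$-measurable; thus $U\cap X''$ is $\mu$-measurable. Together with completeness of the Lebesgue measure (which disposes of $U\setminus X''$), this shows $U$ is $\mu$-measurable, and since open sets generate the $\rho$-Borel $\sigma$-algebra we conclude that $\mu$ is Borel for the topology of $\rho$. The main obstacle, as flagged, is the measurability of balls: it forces one to build the countable dense set \emph{inside} $G$ rather than in $X'$, which is legitimate precisely because separability passes to subspaces.
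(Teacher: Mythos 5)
Your proof is correct and follows essentially the same route as the paper: Fubini gives measurability of balls centered at almost every point, admissibility gives a countable dense set which can be taken inside that good set, and every open set is then written (mod a null set handled by completeness of the Lebesgue measure) as a countable union of measurable balls centered at dense points. The paper phrases this as showing each individual ball is measurable and equal mod $0$ to such a union, while you pass directly to arbitrary open sets via a countable base, but the underlying argument is the same.
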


\begin{proof}
For any rational $r>0$ the set $\{(x,y):\,\rho(x,y)<r\}$
is measurable, hence almost all its sections
(balls of radius $r$, hereafter balls mean open balls).
Hence foralmost all ponts $x\in X$ all balls with rational radius
center in $x$ are measurable. In this case all balls centered in $x$ are measurable.
Denote by $X_1$ the set of such points, let $X_2$ be the complement of $X_1$
($\mu(X_2)=0$). Consider a countable dense subset $X'\subset X_1$. Let's prove
that any ball is measurable, from separability it follows that measure is Borel.
Consider the ball $B=B(x_0,r_0)$ centered in $x_0$ with
raidus $r_0$. For any point
 $x\in X'\cap B$ consider the ball $B(x,r_0-\rho(x,x_0))$.
 It lies in $B$ and is measurable. Let's prove that the union of such
 balls contains $X_1\cap B$. Then $B$ contains their
 union $U$ and is contained in $U\cup X_2$, hence $B$ is measurable
 and $B=U$ mod 0.  Take any point $x\in X_1\cap B$. Let's find a point $y\in X'$
such that $\rho(x,y)<(r_0-\rho(x_0,x))/2$. Then by triangle inequality
$\rho(x_0,y)<(r_0+\rho(x_0,x))/2$, so $y\in B$ and moreover the ball $B(y,r_0-\rho(x_0,y))$
contains a point $x$, as we wish.
\end{proof}

If the metric is not separable,
the conclusion of this theorem may fail. Indeed,
let $A$ be a non-measurable subset of $X$,
$x_0\in X\setminus A$ be a point,
define a metric
$$
\rho(x,y)=
\begin{cases}
0,&\text{if $x=y$;}\\
1,&\text{if $x=x_0$, $y\in A$ or $y=x_0$, $x\in A$;}\\
2,&\text{otherwise.}
\end{cases}
$$
It equals $2$ almost everywhere and is so measurable, but the ball $B(x_0,3/2)$
is not measurable.

One may ask an opposite question. Let be given some (not separable)
semimetric $\rho$ on the space $X$. Consider its Borel sigma-algebra
$\mathbb{B}$. Is it true that $\rho$, as a function on $X\times X$,
is measurable w.r.t. sigma-algebra $\mathbb{B} \times \mathbb{B}$?
We do not know the answer.

Now let's prove that two equivalent admissible metrics
coincide on the set of full measure. Of course, this
statement fails without admissibility condition (for example,
for metrics with distances values 1 and 2.)

\begin{theorem}
Let two admissible metrics $\rho_1,\rho_2$ coincide almost everywhere on
 $X\times X$ w.r.t. measure $\mu\times \mu$.
 Then there exists a subset $X'\subset X$ of full measure such that $\rho_1$ and $\rho_2$
 coincide on $X'\times X'$.
\end{theorem}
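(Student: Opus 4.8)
The plan is to exhibit the set $X'$ as an intersection of three full-measure sets and then verify the equality $\rho_1(x,y)=\rho_2(x,y)$ pointwise on $X'\times X'$ using nothing but the triangle inequality and the fact that small balls have positive measure. First I would apply Fubini's theorem to the $(\mu\times\mu)$-null set $\{(x,y):\rho_1(x,y)\neq\rho_2(x,y)\}$: for $\mu$-almost every $x$ its section is null, i.e. $\rho_1(x,\cdot)=\rho_2(x,\cdot)$ holds $\mu$-almost everywhere. Let $G$ be this full-measure set of ``good'' points. Next, since both metrics are admissible, the equivalence of conditions {\rm 2)} and {\rm 3)} in Lemma~\ref{kritdop} supplies full-measure sets $X_1$ and $X_2$ on which, respectively, every $\rho_1$-ball and every $\rho_2$-ball centered at the point has positive measure. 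I then set $X'=G\cap X_1\cap X_2$, which still has full measure. I write $B_i(y,\eps)$ for the $\rho_i$-ball of radius $\eps$ about $y$.

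Now I would fix $x,y\in X'$ and $\eps>0$ and prove $\rho_1(x,y)\le \rho_2(x,y)+2\eps$. Because $x\in G$, the set $N_x=\{z:\rho_1(x,z)=\rho_2(x,z)\}$ has full measure, and because $y\in G$, the set $M_y=\{z:\rho_1(y,z)=\rho_2(y,z)\}$ has full measure. Since $y\in X_1$, the ball $B_1(y,\eps)$ has positive measure, so removing the two null sets $N_x^{\,c}$ and $M_y^{\,c}$ leaves $B_1(y,\eps)\cap N_x\cap M_y$ nonempty; I pick a point $z$ there. For this $z$ one has $\rho_1(z,y)<\eps$, $\rho_2(y,z)=\rho_1(y,z)<\eps$ (as $z\in M_y$), and $\rho_1(x,z)=\rho_2(x,z)$ (as $z\in N_x$). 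The triangle inequality then chains as
\[
\rho_1(x,y)\le \rho_1(x,z)+\rho_1(z,y)=\rho_2(x,z)+\rho_1(z,y)\le \rho_2(x,y)+\rho_2(y,z)+\eps<\rho_2(x,y)+2\eps .
\]
Letting $\eps\to 0$ gives $\rho_1(x,y)\le\rho_2(x,y)$. Running the identical argument with the roles of $\rho_1$ and $\rho_2$ interchanged (now invoking $y\in X_2$ to guarantee that $B_2(y,\eps)$ has positive measure) yields the reverse inequality, hence $\rho_1(x,y)=\rho_2(x,y)$ on all of $X'\times X'$.

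The step I expect to be the real crux is coordinating an approximation carried out in one metric with the almost-everywhere coincidence of the two metrics. To pass from $\rho_2(x,z)$ to $\rho_2(x,y)$ I must know that $z$ is $\rho_2$-close to $y$, whereas the positive-measure ball only gives me $\rho_1$-closeness. This is exactly resolved by forcing $z\in M_y$, where the two distances to $y$ agree, so $\rho_1$-closeness is automatically upgraded to $\rho_2$-closeness; this is also where admissibility is indispensable, since without positive measure of small balls the intersection $B_1(y,\eps)\cap N_x\cap M_y$ could be empty and the whole scheme would collapse. The remaining points -- nonemptiness of the triple intersection and the harmless fact that balls are measurable in the admissible case -- are routine, the former following simply because a positive-measure set minus two null sets is nonempty.
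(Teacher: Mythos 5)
Your proposal is correct and follows essentially the same route as the paper: pass to a full-measure set where the two metrics agree on almost every section and where small balls have positive measure (via Lemma~\ref{kritdop}), then for fixed points pick an auxiliary point in a positive-measure ball lying in the relevant full-measure sections and chain triangle inequalities to get $\rho_1\le\rho_2+2\eps$ and conversely. The only cosmetic differences are that you center the auxiliary ball at $y$ rather than at $x_1$ and track the sets $X_1$, $X_2$ separately instead of noting that corresponding balls of the two metrics have equal measure.
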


\begin{proof}
Note that for almost all $x\in X$, we have $\rho_1(x,y)=\rho_2(x,y)$ for almost all $y\in X$.
Removing the set of zero measure from $X$, we may suppose that it holds for all $x$.
Now for any $x\in X$ and any positive $r$ the balls $\{y\in X : \rho_1(x,y)<r\}$ and $\{y\in X : \rho_2(x,y)<r\}$
have equal measure. Using~\ref{kritdop} we may find
a subset $X'$ of full measure such that for any $x\in X'$ any
ball $\{y\in X : \rho_1(x,y)<r\}$ has a positive measure.
Then the same holds for  $\rho_2$. Let's prove that $\rho_1$ and $\rho_2$ coincide on $X'\times X'$
Take $x_1,x_2\in X'$. Take any $r>0$ and note
that for almost all $y$ such that $\rho_1(x_1,y)<r$ one has
 $\rho_1(x_1,y)=\rho_2(x_1,y)$ and $\rho_1(x_2,y)=\rho_2(x_2,y)$.
 Since the set of such $y$'s (it is a ball) has positive measure, we may
 find at least one point $y$ in it. Then
 $$\rho_2(x_1,x_2)\leq \rho_2(x_1,y)+\rho_2(y,x_2)=\rho_1(x_1,y)+\rho_1(y,x_2)\leq 2r+\rho_1(x_1,x_2).$$
 Since it holds for any $r>0$, we conclude that $\rho_2(x_1,x_2) \leq \rho_1(x_1,x_2)$. Opposite
 inequality is analogous. So, $\rho_1$ and $\rho_2$ coincide on $X'\times X'$ as desired.
\end{proof}

The natural question to ask is the following: which structures on measure spaces
(except semimetric space structure) admit correction theorems like Theorem \ref{ispravlenie}.
May one correct almost-group to the group, almost-vector space to vector-space
and so on? We do not know non-trivial examples, in which
the answer is negative (the trivial examples include, say, the observation
that almost-metric may be corrected only to semimetric, not to metric). We present another positive result

\begin{theorem}\label{ultra}
Let semimetric $\rho$ be defined on a Lebesgue space $X$, and let it
satisfy ultrametric inequality
$\rho(x,z)\leq \max(\rho(x,y),\rho(y,z))$
for almost all triples $(x,y,z)\in X^3$. Then there exists an ultrametric on $X$,
which coincides with $\rho$ on almost all pairs and satisfies ultrametric inequality for all triples.
\end{theorem}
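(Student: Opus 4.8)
The plan is to reconstruct the ultrametric from a nested family of honest (everywhere-defined) measurable equivalence relations, one for each rational radius. Note first that the averaging device of Theorem~\ref{ispravlenie} is useless here: integration does not commute with $\max$, so averaging an almost-ultrametric only produces a semimetric and destroys the strong inequality. Instead I would exploit that an honest ultrametric is exactly a semimetric for which every sublevel set $E_r=\{(x,y):\rho(x,y)\le r\}$ is an equivalence relation, and that the hypothesis already makes each $E_r$ symmetric, reflexive off a null set, and \emph{transitive for almost every triple}: if $\rho(x,y)\le r$ and $\rho(y,z)\le r$ then $\max(\rho(x,y),\rho(y,z))\le r$, so the a.e.\ ultrametric inequality gives $\rho(x,z)\le r$. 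By Fubini the family $\{E_r\}_{r\in\mathbb{Q},\,r>0}$ is also nested off a null set. Thus the whole problem reduces to correcting this almost-nested family of almost-equivalence relations to a genuinely nested family of genuine equivalence relations agreeing with it modulo $0$.

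To correct, I would pass to the measure algebra $\mathcal A$ of $(X,\mu)$, which for a Lebesgue space is a separable (hence Polish) metric space under $d_{\mathcal A}(A,B)=\mu(A\triangle B)$. Writing $E_{s,x}=\{y:\rho(x,y)\le s\}$ for the $s$-slice of $x$, the map $x\mapsto\bigl([E_{s,x}]\bigr)_{s}$ is measurable from $X$ into the Polish space $\mathcal A^{\{s\in\mathbb{Q}:\,s>0\}}$. I then set, for each rational $r>0$,
$$\tilde E_r=\{(x,y):x=y\}\cup\bigl\{(x,y):\ [E_{s,x}]=[E_{s,y}]\text{ and }\mu(E_{s,x})>0\ \text{ for every rational }s\ge r\bigr\}.$$
For each fixed $r$ this is a genuine equivalence relation on all of $X$: reflexivity and symmetry are clear, and transitivity holds because the defining condition is a conjunction of equalities of invariants together with a positivity condition that is automatically shared once two points are related. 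The family is genuinely nested, $\tilde E_r\subseteq\tilde E_{r'}$ for $r\le r'$, precisely because the condition at radius $r$ quantifies over the larger set of scales $s\ge r$ and is therefore the stronger one.

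The hard part will be to show that $\tilde E_r$ coincides with $E_r$ modulo $0$ for every $r$; this is where the a.e.\ inequality and the measure structure must be combined and where possibly-null balls are dangerous. For the inclusion ``$\rho(x,y)\le r$ implies $(x,y)\in\tilde E_r$ a.e.'' I would use almost-laminarity at each scale $s\ge r$ (if $\rho(x,y)\le s$ and $\rho(x,z)\le s$ then $\rho(y,z)\le s$ a.e.) to get $[E_{s,x}]=[E_{s,y}]$ simultaneously for all of the countably many $s\ge r$, together with a Fubini argument showing that $\mu(E_{s,x})>0$ for almost every such pair; this last point is exactly what rescues the atomic situations, such as the discrete ultrametric, where small balls are null yet the construction must still return the correct distance. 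For the reverse inclusion I would argue that if $E_{s,x}$ and $E_{s,y}$ coincide modulo $0$ and are non-null, then one may pick a point $z$ in the common slice for which the triple $(x,z,y)$ obeys the inequality; since $\rho(x,z)\le s$ and $\rho(y,z)\le s$ this forces $\rho(x,y)\le s$, and letting $s\downarrow r$ through the rationals yields $\rho(x,y)\le r$. Reconstructing honest closeness of $x$ and $y$ from the merely measure-theoretic coincidence of their balls, uniformly in the scale, is the crux of the whole argument.

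Granting this, I would finish by setting $u(x,y)=\inf\{r:(x,y)\in\tilde E_r\}$. Because the $\tilde E_r$ are honest nested equivalence relations, $u$ satisfies the ultrametric inequality for \emph{all} triples: if $u(x,y)$ and $u(y,z)$ are both below a rational $r$, then $(x,y),(y,z)\in\tilde E_r$ by nestedness, hence $(x,z)\in\tilde E_r$ by transitivity, so $u(x,z)\le r$, and taking the infimum over such $r$ gives $u(x,z)\le\max(u(x,y),u(y,z))$. The agreement $\tilde E_r=E_r$ mod $0$ for every rational $r$ means that $u$ and $\rho$ have the same sublevel sets modulo $0$, whence $u=\rho$ almost everywhere, and in particular $u$ is finite a.e. The remaining null set of pairs where $u=+\infty$ or the construction misbehaves is then repaired exactly as at the end of the proof of Theorem~\ref{ispravlenie}: fix a good base point $x_0$ and copy the distances from $x_0$ onto the bad null set, which preserves the ultrametric inequality everywhere.
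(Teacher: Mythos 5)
Your construction is sound in outline, but it takes a genuinely different and considerably longer route than the paper, and your opening claim --- that the averaging device of Theorem~\ref{ispravlenie} is useless because integration does not commute with $\max$ --- is exactly the point where you part ways with the actual proof. The paper does reuse the averaging formula, only applied to the powers $\rho^n$: each $\rho^n$ is an almost-metric (since $\max(a,b)^n=\max(a^n,b^n)\le a^n+b^n$), so its correction yields a true semimetric $\rho_n^n$; by the power-mean inequality the $\rho_n$ increase in $n$ and their limit $\tilde\rho$ satisfies $\tilde\rho(x,z)\le\bigl(\tilde\rho(x,y)^n+\tilde\rho(y,z)^n\bigr)^{1/n}$ for every $n$ and every triple, which gives the ultrametric inequality by letting $n\to\infty$, while Lebesgue differentiation gives $\tilde\rho=\rho$ a.e.\ --- the point being that $\max$ is recovered as a limit of $L^n$-type expressions, so averaging is compatible with it after all. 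Your alternative, correcting the sublevel relations $E_r$ to honest nested equivalence relations $\tilde E_r$ via equality of slices in the measure algebra, does work: the inclusion $E_r\subseteq\tilde E_r$ mod $0$ follows from a.e.\ transitivity plus the Fubini observation that $\mu(E_{r,x})>0$ for almost every pair with $\rho(x,y)\le r$; the converse inclusion follows by choosing a witness $z$ in the positive-measure common slice for which the triple inequality holds; and the final retraction onto a base point repairs the null set of infinite values while preserving the ultrametric inequality, just as it preserves the triangle inequality in Theorem~\ref{ispravlenie}. What your approach buys is extra structure (an explicit nested family of measurable partitions realizing the corrected ultrametric) and independence from any integral-averaging device; what it costs is the measure-algebra machinery and several pages where the paper needs three lines.
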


\begin{proof}
For any  $n$ consider the correction of almost-metric
$\rho^n$ given by the formula
$$
(\rho_n(x,y))^n=\limsup_{T\rightarrow +0} T^{-2}\int_0^T\int_0^T \rho^n(x+t,y+s)dtds
$$

By power means inequalities the sequence $\rho_n(x,y)$
increases by $n$ for fixed $x, y$, so it has (finite or infnite) limit
$\tilde{\rho}(x; y)$. By Lebesgue's integrals differentiation theorem this
limit is almost everywhere finite and equal to $\rho(x,y)$.
The function $\tilde{\rho}^n$ is a semimetric for all $n$ (this is so for $\rho_m$
instead $\tilde{\rho}$ with $m\geq n$, and one may pass to limit in the
corresponding triangle inequality).
The infinite values may be avoided on the same way as in the proof of
Theorem~\ref{ispravlenie} for usual metrics.
\end{proof}

We also formulate the following general statement, which includes Theorems
~\ref{ispravlenie},~\ref{ultra} as partial cases.

\begin{conj} Given positive
integers $k\leq n$ and measurable real-valued function $f(x_1,x_2,\dots,x_k)$.
For almost all $y_1,\dots,y_n$ the vector $\{f(y_{i_1},\dots,y_{i_k})\}_{1\leq i_j\leq n}$
of dimension
$n^k$ belongs to the given closed subset in the space
of dimension $n^k$. Then there exists a function $\tilde{f}$, equivalent to $f$,
for which this condition holds for all $y_1,\dots,y_n$.
\end{conj}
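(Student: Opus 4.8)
The plan is to imitate the mechanism behind Theorems~\ref{ispravlenie} and~\ref{ultra}: identify $X$ with the circle $S=\mathbb{R}/\mathbb{Z}$ equipped with Lebesgue measure, and correct $f$ by a local averaging / approximate-limit construction in the $k$ variables, using the translation structure to produce a value at \emph{every} point while the Lebesgue differentiation theorem guarantees agreement with $f$ almost everywhere. The essential new feature, compared with the two theorems above, is that the constraint is an arbitrary \emph{closed} set $C\subset\mathbb{R}^{n^k}$ rather than a convex one (triangle inequalities) or a power-mean limit of convex ones (the ultrametric case). Consequently I would not average the $C$-condition, since averages of points of a non-convex closed set need not lie in it; instead I would arrange that the corrected vector is a genuine \emph{limit} of vectors known to lie in $C$, and invoke closedness of $C$ only at the very end.

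First I would stratify $X^k$ by the diagonal pattern. For each partition $\pi$ of $\{1,\dots,k\}$ let $\Delta_\pi$ be the set of points $(x_1,\dots,x_k)$ whose coordinates coincide exactly according to $\pi$; each $\Delta_\pi$ is naturally a torus $S^{|\pi|}$ carrying its own Lebesgue measure. The key observation is that, although each lower stratum is null in $X^k$ and hence invisible to the equivalence ``$\tilde f=f$ almost everywhere'', the hypothesis does constrain $f$ on every $\Delta_\pi$ up to a set of measure zero \emph{within the stratum}: a component $f(y_{i_1},\dots,y_{i_k})$ depends only on those $y_j$ whose index occurs among $i_1,\dots,i_k$, and projecting the full-measure set of good tuples onto these coordinates preserves full measure. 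I would therefore correct stratum by stratum, defining $\tilde f$ on $\Delta_\pi\cong S^{|\pi|}$ as the approximate limit of $f|_{\Delta_\pi}$ (equivalently, the $T\to 0$ limit of box-averages along the stratum), which exists and equals $f$ at almost every point of the stratum by approximate continuity of measurable functions.

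The heart of the verification is a coordinated-perturbation argument, and this is where the construction must match the geometry. Fix any tuple $(y_1,\dots,y_n)$ and perturb it to $(y_1+t_1,\dots,y_n+t_n)$ with small $t=(t_1,\dots,t_n)$. In the component indexed by $(i_1,\dots,i_k)$ the $j$-th argument is shifted by $t_{i_j}$, so two positions carrying the same index are shifted by the \emph{same} amount; hence the perturbed point stays on the stratum $\Delta_\pi$ of the base point, and the shift is precisely an independent translation of the $|\pi|$ block-coordinates of that stratum, i.e. exactly the translation used to define $\tilde f$ on $\Delta_\pi$. Approximate continuity along the stratum then shows that, for each component, the set of $t$ for which $f$ at the shifted point lies within $\eps$ of $\tilde f$ at the base point has density $1$ at $0$; lifted to the cube $[0,T]^n$ this remains a density-$1$ cylinder condition. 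Intersecting the finitely many density-$1$ component conditions with the density-$1$ set of ``good'' perturbations (those whose shifted tuple already lies in $C$), I obtain for each $T$ a perturbation $t(T)\in[0,T]^n$ making all $n^k$ components close to the entries of $v:=\{\tilde f(y_{i_1},\dots,y_{i_k})\}$ while keeping the shifted vector in $C$; as $T\to 0$ one has $t(T)\to 0$, exhibiting $v$ as a limit of points of $C$, so $v\in C$ by closedness.

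The main obstacle is the null set of ``bad'' base points at which approximate continuity fails, together with the non-generic tuples. The clean argument above only delivers the conclusion for almost every $(y_1,\dots,y_n)$ — essentially reproving the hypothesis — because for an exceptional tuple some component's base point may be one of the measure-zero points of its stratum where the approximate limit, and hence the density-$1$ property, is unavailable. In Theorems~\ref{ispravlenie} and~\ref{ultra} this difficulty evaporates: there the corrected value is defined at every point as an honest $\limsup$ of box-averages, and the constraint (a convex cone of triangle inequalities, or a monotone power-mean limit thereof) passes to that $\limsup$ everywhere, so no pointwise density argument is needed. For a general closed $C$ there is no averaging that respects the constraint, and it is exactly the upgrade from ``almost every tuple'' to ``every tuple'' across a non-convex $C$ that I do not see how to force. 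A natural attempt is to replace the approximate limit by the full local essential range $R(p)$ — the closed set of values attained by $f$ with positive density near $p$, defined at every point and a singleton almost everywhere — and to choose $\tilde f(p)\in R(p)$ on the bad set so as to satisfy $C$; but then the several component conditions have only positive density rather than density $1$, and since overlapping index patterns couple their coordinate blocks one can no longer guarantee a common small perturbation. Resolving this coordination across strata on the exceptional set is, I expect, the real content of the conjecture, and the reason it is left open.
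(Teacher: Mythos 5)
This statement is stated in the paper as a \emph{Conjecture}: the authors give no proof and explicitly leave it open, offering Theorems~\ref{ispravlenie} and~\ref{ultra} only as the special cases that motivated it. So there is no "paper proof" to compare against, and your proposal, by your own admission in its final paragraph, is not a proof either. The gap you name is genuine and is precisely the content of the conjecture: the box-averaging correction $\tilde f$, whether realized as a $\limsup$ of averages or as an approximate limit along diagonal strata, is only guaranteed to reproduce values of $f$ (and hence membership in $C$) at points of approximate continuity, which form a set of full measure in each stratum but not the whole stratum. For the exceptional tuples you need a single small perturbation $t\in[0,T]^n$ that simultaneously places all $n^k$ components near prescribed values \emph{and} lands the shifted tuple in the good set; with a non-convex closed $C$ the component conditions only have positive density rather than density $1$ at such points, and the coupling of coordinate blocks across overlapping index patterns prevents you from intersecting them. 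Nothing in your argument forces that intersection to be nonempty, so the conclusion "for all $y_1,\dots,y_n$" is not reached.

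That said, your diagnosis of why the two proved theorems escape this difficulty is accurate and matches the paper's mechanism: in Theorem~\ref{ispravlenie} the constraint set (the cone cut out by the triangle inequalities) is closed and convex, so it is preserved by the averaging that defines $\tilde\rho$ everywhere, with no pointwise density argument needed; in Theorem~\ref{ultra} the ultrametric constraint is reached as a monotone limit of power-mean corrections, each of which is again a convex (triangle-inequality) constraint. Your stratification of $X^k$ by diagonal pattern and the observation that the hypothesis constrains $f$ on each stratum up to a stratum-null set are sensible preliminary reductions that would likely appear in any eventual proof. But as it stands the proposal identifies the obstruction rather than overcoming it, so it should be read as an analysis of why the conjecture is open, not as a resolution of it.
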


The questions of this paper arised in the program in ergodic
theory, initiated by A.~M.~Vershik. We are also grateful to him for
support and numerous helpful discussions.

\bigskip

Zatitskiy P.~B., Petrov F.~V. Correction of metrics.

We prove that a symmetric nonnegative function of two
variables on a
Lebesgue space that satisfies the triangle inequality for
almost all triples
of points is equivalent to some semimetric. Some other
properties of metric
triples (spaces with structures of a measure space and a
metric space) are
discussed.

\end{document}